\newcommand\cyr{%
\renewcommand\rmdefault{wncyr}%
\renewcommand\sfdefault{wncyss}%
\renewcommand\encodingdefault{OT2}%
\normalfont
\selectfont}
\DeclareTextFontCommand{\textcyr}{\cyr}
\DeclareFontFamily{OT1}{rsfs}{}
\DeclareFontShape{OT1}{rsfs}{n}{it}{<-> rsfs10}{}
\DeclareMathAlphabet{\mathscr}{OT1}{rsfs}{n}{it}
\numberwithin{equation}{section}
\newtheorem{theorem}{Theorem}[section]
\newtheorem{lemma}[theorem]{Lemma}
\newtheorem{corollary}[theorem]{Corollary}
\newtheorem{question}{Question}
\theoremstyle{definition}
\newtheorem{definition}[theorem]{Definition}
\newtheorem{remark}[theorem]{Remark}
\theoremstyle{remark}
\newtheorem{acknowledgement}{Acknowledgement}
\renewcommand{\ker}{\operatorname{Ker}}
\newcommand{\fm}{\frak{m}}
\newcommand{\fp}{\frak{p}}
\newcommand{\fq}{\frak{q}}
\begin{document}
\title[The Frobenius closure of parameter ideals]{On the Frobenius closure of parameter ideals when the ring is F-injective on the punctured spectrum}

\author[Duong Thi Huong]{Duong Thi Huong}
\address{Department of Mathematics, Thang Long University, Hanoi, Vietnam}
\email{huongdt@thanglong.edu.vn}

\author[Pham Hung Quy]{Pham Hung Quy}
\address{Department of Mathematics, FPT University, Hanoi, Vietnam}
\email{quyph@fe.edu.vn}

\thanks{2020 {\em Mathematics Subject Classification\/}: 13A35, 13H15, 13D45.\\
The author is partially supported by a fund of Vietnam National Foundation for Science
and Technology Development (NAFOSTED)}

\keywords{Frobenius closure, The Frobenius test exponent, generalized Cohen-Macaulay, $F$-injective.}

\begin{abstract} 
Let $(R,\fm)$  be an excellent generalized Cohen-Macaulay local ring of dimension $d$ that is $F$-injective on the punctured spectrum. Let $\frak q$ be a standard parameter ideal of $R$. The aim of the paper is to prove that 
$$\ell_R(\fq^F/\fq)\leq \sum\limits_{i=0}^{d}\binom{d}{i}\ell_R(0^F_{H^i_{\fm}(R)}).$$
Moreover, if $\frak q$ is contained in a large enough power of $\fm$, we have
$$\fq^F/\fq \cong \bigoplus_{i=0}^d (0^F_{H^i_{\fm}(R)})^{\binom{d}{i}}.$$
\end{abstract}
\maketitle
\section{Introduction}
Let $(R,\fm)$ be a Noetherian local ring of characteristic $p>0$ and of dimension $d$, $I$ an ideal of $R$ and $\fq$ a parameter ideal of $R$. The {\it Frobenius closure} of $I$ is $I^F = \{x \mid x^{p^e} \in I^{[p^e]} \text{ for some } e \ge 0\}$, where $I^{[p^e]} = (r^{p^e} \mid r \in I)$ is the $e$-th Frobenius power of $I$. The Frobenius endomorphism $F:R\to R$, $x\mapsto x^p$ and its localizations induce Frobenius action on the local cohomology module $F:H_{\fm}^i(R)\to H_{\fm^{[p]}}^i(R)\cong H_{\fm}^i(R)$ for all $i\ge 0$ via \v{C}ech complex. Similarly, we define the {\it Frobenius closure of the zero submodule} of $H_{\fm}^i(R)$ is 
$$0^F_{H^i_{\fm}(R)}=\{\eta\in H^i_{\fm}(R)\mid F^e(\eta)=0 \text{ for some } e \}.$$
Since $H^i_{\fm}(R)$ is always Artinian for all $i\ge 0$, there is a non-negative integer $e$ such that 
$$0^F_{H^i_{\fm}(R)}= \ker (H^i_{\fm}(R) \overset{F^{e'}}{\longrightarrow} H^i_{\fm}(R))$$ for all $e'\ge e$ by Hartshorne-Speiser \cite{HS77} and Lyubeznik \cite{L97}.  We define the {\it Hartshorne-Speiser-Lyubeznik number} of a local ring $R$ as follows
$$\mathrm{HSL}(R) = \min \{ e \mid   0^F_{H^i_{\fm}(R)} =   \ker (H^i_{\fm}(R) \overset{F^e}{\longrightarrow} H^i_{\fm}(R)) \text{ for all } i = 0, \ldots, d\}.$$
Due to the Noetherianess of $R$, for every ideal $I$ there exists a non-negative integer $e$ such that $(I^F)^{[p^e]}=I^{[p^e]}$. The smallest number $e$ satisfying the condition is called {\it Frobenius test exponent} of $I$, denoted by $\mathrm{Fte}(I)$. There is no uniform bound for the Frobenius test exponent of all ideals by Brenner \cite{B06}. To restrict to parameter ideals, we define the {\it Frobenius test exponent for parameter ideals} of the ring $R$ is
$$\mathrm{Fte}(R) = \min \{e\mid  (\fq^F)^{[p^e]}=\fq^{[p^e]} \text{ for all } \fq \}.$$
And conventionally $\mathrm{Fte}(R)=\infty$ if we have no such $e$.
Recall that $R$ is called {\it $F$-injective} if the Frobenius action on all local cohomology modules $H^i_{\fm}(R)$ are injective. Thus $R$ is $F$-injective iff $0^F_{H^i_{\fm}(R)}=0$ for all $i\ge 0$ that is equivalent to $\mathrm{HSL}(R)=0$. And the condition $\fq^F=\fq$ for all parameter ideals $\fq$ agrees with $\mathrm{Fte}(R)=0$. There are strong connections between $\fq^{F}$ and $0^{F}_{H^i_{\fm}(R)}$ for $i\ge 0$, while the $\mathrm{Fte}(R)$ is closely related with the $\mathrm{HSL}(R)$. Fedder proved that in a Cohen-Macaulay local ring $(R,\fm)$, two conditions $\fq^F=\fq$ for all parameter ideals $\fq$ and $0^{F}_{H^i_{\fm}(R)}=0$ for all $i\ge 0$ are equivalent. Moreover, Katzman and Sharp \cite{KS06} showed that $\mathrm{Fte}(R)=\mathrm{HSL}(R)< \infty$ in Cohen-Macaulay rings. Ma \cite{Ma15} extended Fedder's result for class of generalized Cohen-Macaulay local rings. If $R$ is generalized Cohen-Macaulay ring, Huneke, Katzman, Sharp and Yao proved that $\mathrm{Fte}(R)< \infty$. This assertion is reproved by the second author in \cite{Q19}: If $R$ is a generalized Cohen-Macaulay ring, then for any standard parameter ideal $\fq$ we have
$$\mathrm{Fte}(\fq) \le \sum_{i=0}^d\binom{d}{i} \mathrm{HSL}(H^i_{\fm}(R)).$$ 
In any local ring $(R,\fm)$, the second author and Shimomoto \cite{QS17} showed that if every parameter ideal is Frobenius closed then $R$ is $F$-injective. That is if $\mathrm{Fte}(R) = 0$ then $\mathrm{HSL}(R) = 0$. The converse is not true for non-equdimensional local rings. 
In \cite[Question 3]{Q18} the second author asked the following question. 
\begin{question}\label{Q question} 
Let $(R, \fm)$ be an excellent generalized Cohen-Macaulay local ring that is $F$-injective on the punctured spectrum. Is it true that
$$\ell_R (\fq^F / \fq) \le \sum_{i=0}^{d}\binom{d}{i} \ell_R (0^F_{H^i_{\fm}(R)})$$
for all standard parameter ideals $\fq$.\footnote{In Question 3 of \cite{Q18}, the second author missed the word "standard" in the statement.}
\end{question}
Fortunately, the method of \cite{Q19} works for this question. 
\begin{theorem}[=Theorem 3.3]
Question \ref{Q question} has an affirmative answer.
\end{theorem}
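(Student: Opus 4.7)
The plan is to mimic the induction strategy of \cite{Q19}, which proves a parallel bound on $\mathrm{Fte}(\fq)$ in terms of the HSL numbers, and to translate each exponent-bound into a length-bound at every step. I would first reduce to the case where $R$ is complete: passage to $\widehat{R}$ preserves generalized Cohen-Macaulayness, $F$-injectivity on the punctured spectrum (since $R$ is excellent), and each of the lengths appearing in the statement. Under the hypotheses all the modules $0^F_{H^i_\fm(R)}$ have finite length: for $i<d$ this follows from $H^i_\fm(R)$ itself being of finite length, while for $i=d$ the $F$-injectivity on the punctured spectrum forces $0^F_{H^d_\fm(R)}$ to be $\fm$-torsion, hence of finite length inside the Artinian module $H^d_\fm(R)$. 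The base case $d=0$ is immediate, since $\fq=0$ and $R=H^0_\fm(R)$, giving $\fq^F=0^F_R=0^F_{H^0_\fm(R)}$.

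For the inductive step I would set $\fq'=(x_1,\ldots,x_{d-1})$ and work with the four-term exact sequence
$$0 \longrightarrow (\fq':_R x_d)/\fq' \longrightarrow R/\fq' \xrightarrow{\,x_d\,} R/\fq' \longrightarrow R/\fq \longrightarrow 0.$$
Standardness of $\fq$ forces $(\fq':_R x_d)/\fq'$ to be annihilated by some power of $\fm$ and, more importantly, to fit into a filtration whose subquotients are controlled by the $H^i_\fm(R)$ with multiplicities $\binom{d-1}{i}$. Given $a \in \fq^F$, writing $a^{p^e}=b+c\,x_d^{p^e}$ with $b \in (\fq')^{[p^e]}$ shows that the class of $c$ modulo $\fq'$ lies in $(\fq')^F/\fq'$, to which, after suitable reinterpretation through the local cohomology of $R$, the inductive hypothesis applies with bound $\sum_{i=0}^{d-1} \binom{d-1}{i}\,\ell_R(0^F_{H^i_\fm(R)})$. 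The additional contribution, coming from the Frobenius torsion of $R/(\fq':_R x_d)$ embedded via the displayed sequence, is bounded by $\sum_{i=1}^{d}\binom{d-1}{i-1}\,\ell_R(0^F_{H^i_\fm(R)})$. Adding the two estimates and invoking Pascal's identity $\binom{d}{i}=\binom{d-1}{i}+\binom{d-1}{i-1}$ produces the stated bound.

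The main obstacle is that the intermediate ring $R/\fq'$ is neither generalized Cohen-Macaulay nor $F$-injective on its punctured spectrum in general, so one cannot apply the inductive hypothesis to it directly. The resolution, already central to \cite{Q19}, is to bypass this by working throughout with the local cohomology of $R$ itself, using the standardness hypothesis to translate between $H^*_\fm(R/\fq_j)$ and $H^*_\fm(R)$ without ever leaving the original ring. For the isomorphism assertion, when $\fq \subseteq \fm^N$ with $N$ large, every finite-length $\fm$-torsion module appearing in the filtration is annihilated by $\fq$; this forces the extensions to split and all intermediate inequalities to become equalities, yielding the direct-sum decomposition $\fq^F/\fq \cong \bigoplus_{i=0}^d (0^F_{H^i_\fm(R)})^{\binom{d}{i}}$.
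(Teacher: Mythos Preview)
Your overall architecture---peel off one parameter at a time and combine two contributions via Pascal's identity---matches the paper's. The gap is in the concrete inductive step.

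The sentence ``writing $a^{p^e}=b+c\,x_d^{p^e}$ with $b\in(\fq')^{[p^e]}$ shows that the class of $c$ modulo $\fq'$ lies in $(\fq')^F/\fq'$'' is false. Take $R$ Cohen--Macaulay, $a=x_d\in\fq=\fq^F$, $e=0$; then $b=0$, $c=1\notin(\fq')^F$. More to the point, the assignment $a\mapsto c$ is not well-defined on $\fq^F/\fq$: changing $a$ by $v x_d\in\fq$ changes $c$ by $v^{p^e}$, with $v$ arbitrary. So no map $\fq^F/\fq\to(\fq')^F/\fq'$ arises this way, and there is nothing to which an inductive bound can be applied. (Separately, $(\fq')^F/\fq'$ need not have finite length, since $\fq'$ is not $\fm$-primary; the finite-length object one actually wants is $(\fq')^F\cap(\fq':\fm^{\infty})/\fq'$.)

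What the paper does---and what your final paragraph correctly names but does not carry out---is to abandon element-level manipulation and work with the \emph{relative} Frobenius action $F_R:H^j_\fm(R/\fq_i)\to H^j_\fm(R/\fq_i^{[p^e]})$ and its nilpotent locus $0^{F_R}_{H^j_\fm(R/\fq_i)}$. One has $0^{F_R}_{H^0_\fm(R/\fq_d)}=\fq^F/\fq$ and $0^{F_R}_{H^i_\fm(R)}=0^{F}_{H^i_\fm(R)}$. Standardness forces the connecting maps in the long exact sequence for $0\to R/(\fq_{i-1}:x_i)\to R/\fq_{i-1}\to R/\fq_i\to 0$ to vanish in the relevant range, producing $F_R$-equivariant short exact sequences on local cohomology; taking kernels of $F_R^e$ then yields
\[
\ell_R\bigl(0^{F_R}_{H^j_\fm(R/\fq_i)}\bigr)\le \ell_R\bigl(0^{F_R}_{H^j_\fm(R/\fq_{i-1})}\bigr)+\ell_R\bigl(0^{F_R}_{H^{j+1}_\fm(R/\fq_{i-1})}\bigr)\qquad(i+j\le d).
\]
Iterating this recursion is the actual induction, and it stays inside quotients of the fixed ring $R$, so the obstacle you flag never arises. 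Your proposal gestures at this resolution but substitutes for it an element-chasing step that does not implement it.
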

Moreover we also show a better result when $\fq$ is contained in a large enough power of $\fm$. This is based on the splitting of local cohomology in \cite{CQ11}.
\begin{theorem}[=Theorem 3.5]
Let $(R,\fm)$ be an excellent generalized Cohen-Macaulay local ring of dimension $d$ that is $F$-injective on punctured spectrum. Let $\frak q$ be a parameter ideal of $R$ contained in a large enough power of $\fm$. We have 
$$\fq^F/\fq \cong \bigoplus_{i=0}^d (0^F_{H^i_{\fm}(R)})^{\binom{d}{i}}.$$
\end{theorem}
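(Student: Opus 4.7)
The strategy is to combine the length bound of Theorem~3.3 with the direct-sum decomposition of local cohomology modules for parameters in a sufficiently deep power of $\fm$ established in \cite{CQ11}. Since $\fq$ lies in a large power of $\fm$, it is automatically standard in the generalized Cohen--Macaulay ring $R$, so Theorem~3.3 applies and furnishes the upper bound
$$\ell_R(\fq^F/\fq)\le \sum_{i=0}^d\binom{d}{i}\ell_R(0^F_{H^i_\fm(R)}).$$

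The first step in my plan is to reinterpret $\fq^F/\fq$ inside local cohomology. Because $\fq^F/\fq$ has finite length, it is annihilated by a power of $\fm$ and therefore embeds canonically into $H^0_\fm(R/\fq)$; moreover, unwinding the definition of Frobenius closure and the Frobenius action induced on $H^0_\fm(R/\fq)$ from $R/\fq$, one checks the identification $\fq^F/\fq = 0^F_{H^0_\fm(R/\fq)}$ of Frobenius modules. The second step is to appeal to \cite{CQ11}: for $\fq$ in a sufficiently large power of $\fm$, the module $H^0_\fm(R/\fq)$ admits a direct-sum decomposition into pieces of the form $H^i_\fm(R)^{\binom{d}{i}}$ (appropriately truncated to the finite-length Frobenius-nilpotent part when $i=d$), and this decomposition is compatible with the Frobenius structure. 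Since the operator $M\mapsto 0^F_M$ commutes with finite direct sums of Frobenius modules, applying it to the CQ11 splitting yields
$$\fq^F/\fq \;=\; 0^F_{H^0_\fm(R/\fq)} \;\cong\; \bigoplus_{i=0}^d \bigl(0^F_{H^i_\fm(R)}\bigr)^{\binom{d}{i}},$$
as asserted. Note that under $F$-injectivity on the punctured spectrum, $0^F_{H^d_\fm(R)}$ is supported only at $\fm$ and is hence of finite length, so all summands above are legitimately finite-length modules.

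The main technical obstacle will be confirming that the splitting extracted from \cite{CQ11} is Frobenius-equivariant. That splitting is proved by induction on $d$ via the short exact sequences
$$0 \to R/(x_1,\ldots,x_{d-1}) \overset{x_d}{\longrightarrow} R/(x_1,\ldots,x_{d-1}) \to R/\fq \to 0$$
and the associated long exact sequences of local cohomology; every arrow intertwines the Frobenius, so compatibility ought to propagate, but the splitting maps at each inductive step must be chosen to respect Frobenius-linearity. If a direct verification of this turns out to be delicate, a cleaner fallback is to use \cite{CQ11} only to construct an injective Frobenius-equivariant homomorphism $\bigoplus_{i=0}^d(0^F_{H^i_\fm(R)})^{\binom{d}{i}} \hookrightarrow \fq^F/\fq$, and then invoke Theorem~3.3 so that equality of finite lengths automatically promotes this injection to an isomorphism.
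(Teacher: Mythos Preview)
Your overall strategy coincides with the paper's: identify $\fq^F/\fq$ with the Frobenius-nilpotent part of $H^0_\fm(R/\fq)$, apply the \cite{CQ11} splitting inductively through the quotients $R/(x_1,\ldots,x_i)$, and pass to nilpotent parts on each summand. The paper does not invoke Theorem~3.3 or any length-matching fallback; it simply runs the induction directly to obtain $0^{F_R}_{H^j_\fm(R/(x_1,\ldots,x_i))}\cong\bigoplus_{k=j}^{i+j}(0^F_{H^k_\fm(R)})^{\binom{i}{k-j}}$ for all $i+j\le d$, using at the top step that $0^F_{H^d_\fm(R)}\subseteq 0:_{H^d_\fm(R)}\fm^{n_0}$ to fit inside the truncated summand from Lemma~\ref{splitting}.

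There is, however, one genuine gap in your formulation. The identification $\fq^F/\fq=0^F_{H^0_\fm(R/\fq)}$ is \emph{false} if the Frobenius action you mean is the intrinsic Frobenius of the ring $R/\fq$: under that action $\overline{x}$ is nilpotent iff $x^{p^e}\in\fq$ for some $e$, whereas $x\in\fq^F$ requires the stronger condition $x^{p^e}\in\fq^{[p^e]}$. For instance, with $R=k[[t]]$ and $\fq=(t^2)$ one has $\fq^F=\fq$, yet $\overline{t}$ is Frobenius-nilpotent in $R/(t^2)$. The paper avoids this by working throughout with the \emph{relative} Frobenius $F_R:H^i_\fm(R/K)\to H^i_\fm(R/K^{[p]})$ introduced in \cite{PQ19}, for which Lemma~\ref{Fro0} gives the correct identity $0^{F_R}_{H^0_\fm(R/\fq)}=\fq^F/\fq$. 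The maps in the long exact sequences (and hence the \cite{CQ11} splittings) are compatible with $F_R$ by the commutative diagram $(\star)$ already used in the proof of Theorem~3.3, which addresses exactly the equivariance concern you raised. Once you replace $F$ by $F_R$, your argument and the paper's are essentially the same; your length-counting fallback via Theorem~3.3 is a pleasant alternative the paper does not use, but it too only makes sense with the relative Frobenius.
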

Finally, we apply the above theorem to get a good bound for $\mathrm{Fte}(R)$ in the case $(R,\fm)$ is an excellent generalized Cohen-Macaulay local ring that is $F$-injective on the punctured spectrum.\\

 The paper is organized as follows. In the next section we collect the basic notion and background material relevant to the results of the last section, while the main results are presented in the last section.
\begin{acknowledgement} This paper was developed while the authors visited the Vietnam Institute for Advanced Study in Mathematics (VIASM). The authors are grateful to the VIASM for the financial support and hospitality.
\end{acknowledgement}
 
\section{Preliminary}
\subsection{Generalized Cohen-Macaulay module}
 In this subsection, the ring $R$ is free characteristic.
 We recall the notation of the filter regular sequence which was introduced by Cuong, Schenzel, and Trung in \cite{CST78}. Let $(R,\fm)$ be a Noetherian local ring, $M$ a finitely generated $R$-module and $x_1,\ldots, x_t$ a sequence of elements of $R$. Then we say that $x_1,\ldots, x_t$ is a {\it filter regular sequence} on $M$ if the quotient
$$\frac{(x_1,\ldots,x_{i-1})M:_Mx_i}{(x_1,\ldots,x_{i-1})M}$$
is an $R$-module of finite length for every $1\le i\le t$.
If the sequence $x_1,\ldots, x_t$ is a filter regular sequence on $M$ then the sequence $x_1^{n_1},\ldots,x_t^{n_t}$ is a filter regular sequence on $M$ for all $n_1,\ldots,n_t\geq 1$.
\begin{definition}
Let $(R,\fm)$ be a Noetherian local ring, $M$ a finitely generated $R$-module of dimension $t>0$, and $\fq=(x_1,\ldots, x_t)$ a parameter ideal of $M$. Then
\begin{enumerate}
\item $M$ is called {\it generalized Cohen-Macaulay} if $H^i_{\fm}(M)$ is finitely generated for all $i<t$.
\item The parameter ideal $\fq$ is called {\it standard} if $\fq H^j_{\fm}(M/(x_1,\ldots,x_i)M)=0$ for all $i+j<t$.
\end{enumerate}
\end{definition}
\begin{remark}
Let $(R,\fm)$ be a Noetherian local ring that is a homomorphic image of a Cohen-Macaulay local ring. Let $M$ be a finitely generated $R$-module of dimension $t>0$. Then
\begin{enumerate}
\item Module $M$ is generalized Cohen-Macaulay if and only if every system of parameters $x_1,\ldots,x_t$ of $M$ is a filter regular sequence on $M$.
\item Module $M$ is generalized Cohen-Macaulay if and only if $M$ is equidimensional and $M_{\fp}$ are Cohen-Macaulay for all $\fp \in \mathrm{Spec}(R)\setminus \{\fm\}$, that is $M$ is equidimensional and Cohen-Macaulay on the puncture spectrum. 
\end{enumerate}
\end{remark}
We need also a splitting result for local cohomology \cite[Corollary 4.11]{CQ11}.
\begin{lemma}\label{splitting} Let $M$ be a generalized Cohen-Macaulay module of dimension $t$. Let $n_0$ be a positive integer such that $\fm^{n_0}H^i_{\fm}(M) = 0$ for all $i < t$. Then for any parameter element $x \in \fm^{2n_0}$, we have
$$H^i_{\fm}(M/xM) \cong H^i_{\fm}(M) \oplus H^{i+1}_{\fm}(M)$$
for all $i < t-1$, and 
$$0:_{H^{\dim M-1}_{\fm}(M/xM)} \fm^{n_0} \cong H^{\dim M-1}_{\fm}(M) \oplus 0:_{H^{\dim M}_{\fm}(M)} \fm^{n_0}.$$ Moreover, if a system of parameter $x_1, \ldots, x_d$ is contained in $\fm^{2n_0}$, then it is standard.

\end{lemma}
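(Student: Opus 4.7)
The strategy is to derive both statements from the long exact sequence of local cohomology attached to
\[0 \to M/(0:_M x) \xrightarrow{\cdot x} M \to M/xM \to 0,\]
exploiting the factorisation $x = yz$ with $y,z \in \fm^{n_0}$ that is available because $x \in \fm^{2n_0}$. Since $x$ is a parameter on a generalized Cohen-Macaulay module, $0:_M x$ has finite length, hence $H^i_\fm(M/(0:_M x)) \cong H^i_\fm(M)$ for every $i \ge 1$. The hypothesis $\fm^{n_0} H^j_\fm(M)=0$ for $j<t$ together with $x \in \fm^{n_0}$ makes multiplication by $x$ vanish on all these lower cohomology modules, so for $1 \le i < t-1$ the long exact sequence collapses to the short exact sequence
\[0 \to H^i_\fm(M) \to H^i_\fm(M/xM) \to H^{i+1}_\fm(M) \to 0,\]
and a parallel check handles $i=0$.

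To upgrade this to a direct sum decomposition I would compare it with the analogous sequences produced by $y$ and $z$ in place of $x$. The surjection $M/xM \twoheadrightarrow M/yM$ and the multiplication-by-$z$ map $M/yM \to M/xM$ fit into a morphism of short exact sequences on local cohomology whose right vertical is multiplication by $z$, hence is zero on $H^{i+1}_\fm(M)$; a careful diagram chase, combined with the symmetric comparison involving $z$, extracts a section of $H^i_\fm(M/xM) \twoheadrightarrow H^{i+1}_\fm(M)$. For the edge case $i = t-1$, $H^t_\fm(M)$ is no longer killed by $\fm^{n_0}$, so I would apply $\Hom_R(R/\fm^{n_0},-)$ to the short exact sequence and use that $H^{t-1}_\fm(M)$ already lies in the $\fm^{n_0}$-socle to obtain the claimed isomorphism with $H^{t-1}_\fm(M) \oplus (0:_{H^t_\fm(M)} \fm^{n_0})$.

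The ``moreover'' clause follows by induction on the length of the partial system of parameters chosen. Assuming $x_1,\ldots,x_{i-1} \in \fm^{2n_0}$ have been handled, the splitting isomorphism applied iteratively exhibits $H^j_\fm(M/(x_1,\ldots,x_{i-1})M)$ for $j<t-i+1$ as a sum of copies of various $H^k_\fm(M)$ with $k<t$, each of which is annihilated by $\fm^{n_0}$. Since $x_i \in \fm^{2n_0} \subseteq \fm^{n_0}$, it therefore annihilates all these cohomology modules, and collecting these vanishings for all $i+j<t$ recovers the definition of a standard parameter ideal. The chief obstacle is the splitting step of the middle paragraph: in general a short exact sequence of Artinian modules whose ends are killed by $\fm^{n_0}$ is far from split, and it is exactly the extra factor of $\fm^{n_0}$ in $x = yz$ that permits the construction.
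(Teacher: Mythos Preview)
The paper does not give its own proof of this lemma; it is quoted verbatim from \cite[Corollary~4.11]{CQ11}, so there is no in-paper argument to compare with. Your overall architecture---derive the short exact sequences from the long exact sequence attached to $0\to M/(0:_Mx)\xrightarrow{\cdot x}M\to M/xM\to 0$, then split them using the extra power of $\fm$---is exactly the shape of the argument in \cite{CQ11}, and your treatment of the ``moreover'' clause is correct once the splitting is in hand.

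There is, however, a genuine gap at the step you flag as ``the chief obstacle.'' You write that $x\in\fm^{2n_0}$ makes available a factorisation $x=yz$ with $y,z\in\fm^{n_0}$, and the rest of your splitting argument (comparing $M/xM$ with $M/yM$ via the projection and the multiplication-by-$z$ map) rests entirely on having such $y$ and $z$. But an element of $\fm^{2n_0}$ is in general only a \emph{sum} of products of pairs of elements of $\fm^{n_0}$, not itself such a product. For instance, in a two-dimensional regular local ring a generic element of $\fm^{2}$ is irreducible. Once $x$ fails to factor, the module $M/yM$ you want to compare with simply does not exist, and the diagram chase producing the retraction $H^i_\fm(M/xM)\to H^i_\fm(M)$ collapses.

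The fix is not a cosmetic rewrite with sums $\sum_k y_kz_k$: the ideals $(y_k)$ need not contain $(x)$, so there is no projection $M/xM\to M/y_kM$ to work with. The argument in \cite{CQ11} avoids this by working not with a hypothetical factorisation of $x$ but with the annihilator structure directly: one shows that the connecting homomorphism $H^i_\fm(M/xM)\to H^{i+1}_\fm(M)$, restricted to the $\fm^{n_0}$-torsion, already lands in a submodule on which one can build an explicit section, using that $x\cdot H^j_\fm(M)=0$ and that the extension class is controlled by the image of $x$ in a module killed by $\fm^{n_0}$. If you want to repair your route, the cleanest substitute for ``$x=yz$'' is to observe that multiplication by $x$ on $M$ factors through $\fm^{n_0}M$ and then analyse the induced maps on local cohomology; this recovers the retraction without ever asking $x$ to be a product.
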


\subsection{Positive characteristic $p$}
In this subsection, $(R,\fm)$ is a Noetherian ring of characteristic $p>0$ and of dimension $d$.\\

{\bf Frobenius closure.} Let $(R,\fm)$ be a Noetherian ring of characteristic $p>0$ and of dimension $d$. Let $F:R \xrightarrow{x \mapsto x^p} R$ be the Frobenius endomorphism and $F^e: R \xrightarrow{x \mapsto x^{p^e}} R$ the $e$-th Frobenius endomorphism. In order to notationally distinguish the source and the target of the $e$-th Frobenius endomorphism $F^e$, we will use $F_*^e(R)$ to denote the target. $F_*^e(R)$ has a left $R$-module structure via the $e$-th Frobenius endomorphism $F^e$, that is $r_1F_*^e(r_2):= F_*^e(r_1^{p^e}r_2)$ for all $r_1,r_2\in R$. 
\begin{definition} Let $I$ be an ideal of $R$, we define
\begin{enumerate}
\item The {\it $e$-th Frobenius power} of $I$ is $I^{[p^e]} = (x^{p^e} \mid x \in I)$.
\item The {\it Frobenius closure} of $I$, $I^F = \{x \mid  x^{p^e} \in I^{[p^e]} \text{ for some } e \ge 0\}$.
\end{enumerate}
\end{definition}
Let $I$ be an ideal of the ring $R$. By the Noetherianess of $R$, there is an integer $e$, depending on $I$ such that $(I^F)^{[p^e]}=I^{[p^e]}$ for every ideal $I$. The smallest number $e$ satisfying the condition is called the {\it Frobenius test exponent of $I$}, and denoted by $\mathrm{Fte}(I)$,
$$\mathrm{Fte}(I)=\min \{e\mid (I^F)^{[p^e]}=I^{[p^e]}\}.$$
Katzman and Sharp showed the existence of an uniform bound for the Frobenius test exponents if we restrict to class of parameter ideals in a Cohen-Macaulay ring. We define the {\it Frobenius test exponent for parameter ideals} of $R$, $\mathrm{Fte}(R)$, the smallest integer $e$ satisfying the above condition $(\fq^F)^{[p^e]}=\fq^{[p^e]}$ for every parameter ideal $\fq$
$$\mathrm{Fte}(R) = \min \{e\mid  (\fq^F)^{[p^e]}=\fq^{[p^e]} \text{ for all } \fq \}.$$
And conventionally $\mathrm{Fte}(R)=\infty$ if we have no such $e$. Therefore it is natural to ask the following question. 
\begin{question}\label{Katzman Sharp} Let $(R, \fm)$ be an (equidimensional) local ring of positive characteristic $p$. Does there exist an uniform bound for Frobenius test exponents of parameter ideals, i.e., $\mathrm{Fte}(R)<\infty$?
\end{question}
It should be noted that the authors recently used the finiteness of $\mathrm{Fte}(R)$, if have, to find an upper bound of the multiplicity of a local ring \cite{HQ20}.\\

{\bf Frobenius action on local cohomology.} Let $x_1,\ldots, x_d$ be a system of parameters of $R$. Recall that local cohomology $H^i_{\fm}(R)$ may be computed as the homology of the \v{C}ech complex $\check{C}(x_1, \ldots, x_d;R)$
$$0 \longrightarrow R \longrightarrow \bigoplus_i R_{x_i} \longrightarrow \bigoplus_{i<j} R_{x_ix_j}   \longrightarrow \cdots  \longrightarrow R_{x_1 \ldots x_d} \to 0. $$
Consider the following diagram with \v{C}ech complexes $\check{C}(x_1, \ldots, x_d;R)$ and $\check{C}(x_1^p, \ldots, x_d^p;R)$ in two rows.
$$\begin{CD}
 0 @>>> R @>>>  \bigoplus_i R_{x_i}  \longrightarrow \cdots @>>> R_{x_1 \ldots x_d} @>>> 0\\
 @. @VFVV @VFVV @VFVV @.  \\
 0 @>>> R @>>>  \bigoplus_i R_{x_i^p}  \longrightarrow \cdots @>>> R_{x_1^p \ldots x_d^p} @>>> 0
\end{CD}
$$
where the vertical maps are the Frobenius endomorphism $F:R\to R$ and its localizations $F: R_u\to R_{u^{p}}$, $\frac{r}{u^s}\mapsto \frac{r^p}{{u^p}^s}$. The diagram induces a natural Frobenius action on local cohomology $F:H^i_{\fm}(R) \to H^i_{{\fm}^{[p]}}(R) \cong H^i_{{\fm}}(R)$ for all $i \ge 0$. We recall the definition of {\it $F$-injective} ring. 
\begin{definition}
A local ring $(R,\fm,k)$ is {\it $F$-injective} if the Frobenius action on $H^i_{\fm}(R)$ is injective for each $i\ge 0$.
\end{definition} 
\begin{definition}
For each $i\ge 0$, the {\it Frobenius closure of the zero submodule} of $H_{\fm}^i(R)$ is
 $$0^F_{H^i_{\fm}(R)}=\{\eta\in H^i_{\fm}(R)\mid F^e(\eta)=0 \text{ for some } e \}.$$
\end{definition} 
We have $0^F_{H^i_{\fm}(R)}$ is the nilpotent part of $H^i_{\fm}(R)$ under the Frobenius action.
Since $H^i_{\fm}(R)$ is always Artinian for all $i \ge 0$, by \cite[Proposition 1.11]{HS77}, \cite[Proposition 4.4]{L97} and \cite{Sh07}, there exists a non-negative integer $e$ such that $$0^F_{H^i_{\fm}(R)} = \ker (H^i_{\fm}(R) \overset{F^e}{\longrightarrow} H^i_{\fm}(R)).$$ 
We define the {\it Hartshorne-Speiser-Lyubeznik number} of a local ring $(R, \frak m)$ as follows
$$\mathrm{HSL}(R) = \min \{ e \mid   0^F_{H^i_{\fm}(R)} =   \ker (H^i_{\fm}(R) \overset{F^e}{\longrightarrow} H^i_{\fm}(R)) \text{ for all } i = 0, \ldots, d\}.$$
For Question \ref{Katzman Sharp}, Katzman and Sharp \cite{KS06} proved that $\mathrm{Fte}(R) = \mathrm{HSL} (R)$ provided $R$ is Cohen-Macaulay. In general, we have $\mathrm{Fte}(R)\geq \mathrm{HSL}(R)$ for any local ring in \cite{HQ19}. The result was extended for generalized Cohen-Macaulay case by Huneke, Katzman, Sharp, and Yao \cite{HKSY06}. In 2019, the second author \cite{Q19}, using the relative Frobenius action on local cohomology, proved $\mathrm{Fte}(R)<\infty$ for {\it weakly F-nilpotent} rings, i.e., $H^i_{\fm}(R) = 0^F_{H^i_{\fm}(R)}$ for all $i<d$. Maddox \cite{M19} showed this result for {\it generalized weakly F-nilpotent} rings, i.e., $H^i_{\fm}(R)/0^F_{H^i_{\fm}(R)}$ has finite length for all $i<d$. Very recently, this result is extended by the authors in \cite{HQ22}.\\

{\bf Relative Frobenius action on local cohomology.} The relative Frobenius action on local cohomology which was introduced in \cite{PQ19} by Polstra and the second author in study $F$-nilpotent rings. Let $K\subseteq I$ be ideals of $R$. The Frobenius endomorphism $F: R/K \to R/K$ can be factored as composition of two natural maps:
$$\begin{tikzcd}[column sep=10ex]
R/K \arrow[rr, "F" ] & & R/K\\
& R/K^{[p]} \arrow[ul,leftarrow, "F_R"] \arrow[ur,twoheadrightarrow, "\pi"'] & 
\end{tikzcd}$$
where the second map $\pi$ is the natural project map. We denote the first map by $F_R:R/K\to R/K^{[p]}$, $F_R(a+K)=a^p+K^{[p]}$ for all $a\in R$. The homomorphism $F_R$ induces {\it the relative Frobenius actions on local cohomology} $F_R: H^i_I(R/K)\to H^i_I(R/K^{[p]})$ via \v{C}ech complexes. We define the {\it relative Frobenius closure of the zero submodule of $H^i_I(R/K)$ with respect to $R$} as follows
$$0^{F_R}_{H^i_I(R/K)}=\{\eta \mid F^e_R(\eta)=0\in H^i_I(R/K^{[p^e]})\text{ for some }e\gg 0\}.$$
\section{Results}
In this section, $R$ is of positive characteristic $p$. We begin this section with a condition when $0^F_{H^i_{\fm}(R)}$ has finite length for every $i\geq 0$.
\begin{lemma}\label{Lemma 3.1}
Let $(R,\fm)$ be an excellent generalized Cohen-Macaulay local ring of dimension $d$ that is $F$-injective on the punctured spectrum. Then $\ell_R(0^F_{H^i_{\fm}(R)})<\infty$ for all $i\geq 0$.
\end{lemma}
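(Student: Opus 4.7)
The lemma naturally splits into two regimes. For $i<d$, since $R$ is generalized Cohen--Macaulay, $H^i_{\fm}(R)$ itself has finite length, and hence so does its submodule $0^F_{H^i_{\fm}(R)}$; in particular $\ell_R(0^F_{H^i_{\fm}(R)})\le \ell_R(H^i_{\fm}(R))<\infty$. The real content is the case $i=d$, where $H^d_{\fm}(R)$ is Artinian but typically of infinite length.

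The plan is to apply Matlis duality. After passing to the $\fm$-adic completion $\widehat{R}$, which is permissible because Artinian $R$-modules have the same length over $R$ and $\widehat{R}$, and because excellence lets us transfer ``generalized Cohen--Macaulay'' and ``$F$-injective on the punctured spectrum'' to $\widehat{R}$ (the latter via ascent of $F$-injectivity along flat local maps with geometrically regular fibers), we may assume $R$ is complete. Let $\omega_R$ be the canonical module, so $\omega_R \cong H^d_{\fm}(R)^\vee$. By Hartshorne--Speiser--Lyubeznik, fix $e$ with $0^F_{H^d_{\fm}(R)}=\ker F^e$. Matlis-dualizing the Frobenius gives a Cartier operator $C:F_*^e\omega_R\to\omega_R$, and dualizing the exact sequence $0\to\ker F^e\to H^d_{\fm}(R)\to \im F^e\to 0$ produces a natural isomorphism $(\ker F^e)^\vee\cong\omega_R/\im(C^e)$.

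Thus it suffices to show that the finitely generated module $\omega_R/\im(C^e)$ has finite length, which is equivalent to $C^e$ being surjective after localization at every prime $\fp\neq\fm$. For such $\fp$, the ring $R_\fp$ is Cohen--Macaulay of dimension $h=\Ht\fp$ (by the generalized Cohen--Macaulay hypothesis) and $F$-injective (by assumption). Hence the Frobenius is injective on $H^h_{\fp R_\fp}(R_\fp)$, and applying Matlis duality for the Cohen--Macaulay ring $R_\fp$ translates this into surjectivity of the Cartier operator on $\omega_{R_\fp}$. Using $(\omega_R)_\fp\cong \omega_{R_\fp}$, valid because the canonical module commutes with localization over the Cohen--Macaulay locus, we conclude that $C^e$ is surjective at $\fp$, as required.

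The main obstacle I expect is the careful bookkeeping on the Matlis-duality side: establishing the precise identification $(\ker F^e)^\vee\cong\omega_R/\im(C^e)$ with the Cartier operator acting in the right direction, verifying that the Cartier operator commutes with localization at primes in the Cohen--Macaulay locus, and justifying the transfer of $F$-injectivity on the punctured spectrum from $R$ to $\widehat R$. Each of these is standard in the literature but requires some care in the generalized Cohen--Macaulay setting.
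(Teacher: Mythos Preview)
Your proposal is correct and follows essentially the same approach as the paper: both complete, dualize the Frobenius action on $H^d_{\fm}(R)$, localize at a prime $\fp\neq\fm$, and use $F$-injectivity of $R_\fp$ to conclude that the Matlis dual of $0^F_{H^d_{\fm}(R)}$ is supported only at $\fm$. The only cosmetic difference is that the paper writes $R=S/I$ with $S$ regular complete and works directly with $\Ext^{n-d}_S(R,S)$ (re-applying local duality over $S_P$ after localizing), whereas you phrase the same computation via $\omega_R$ and the Cartier operator.
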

\begin{proof}
We can pass to the $\frak m$-adic completion to assume that $R$ is complete. Since $R$ is generalized Cohen-Macaulay then $\ell_R(0^F_{H^i_{\fm}(R)})<\infty$ for all $i<d$. For $i=d$, we consider an exact for all $e\gg 0$
$$0\to 0^F_{H^d_{\fm}(R)}\to H^d_{\fm}(R)\xrightarrow{F^e} H^d_{\fm}(R).$$
Because $R$ is complete, by Cohen's structure theorem we can write $R=S/I$ where $S$ is a complete local ring. By local duality theorem, $H^d_{\fm}(R)^{\vee}\cong \mathrm{Ext}_S^{n-d}(R,S)$ where $n=\mathrm{dim}(S)$. The above exact induces the following exact
$$\mathrm{Ext}_S^{n-d}(R,S)\xrightarrow{(F^e)^{\vee}}\mathrm{Ext}_S^{n-d}(R,S)\to (0^F_{H^d_{\fm}(R)})^{\vee}\to 0.$$
Localization at any prime ideal $P\neq \fm$, for simplification we also denote $P$ for the pre-image of $P$ in $S$ and we have
$$\mathrm{Ext}_{S_P}^{n-d}(R_P,S_P)\xrightarrow{(F^e_P)^{\vee}}\mathrm{Ext}_{S_P}^{n-d}(R_P,S_P)\to (0^F_{H^d_{\fm}(R)})^{\vee}_P\to 0.$$
It should be noted that $n-d=\mathrm{dim}(S_P)-\mathrm{dim}(R_P)$ and now by local duality over $S_P$, we obtain the following exact
$$0\to ((0^F_{H^d_{\fm}(R)})^{\vee}_P)^{\vee_P}\cong 0^{F_P}_{H^{\dim R_P}_{PR_P}(R)} \to H_{PR_P}^{\mathrm{dim}R_P}(R_P)\xrightarrow{F^e_P}H_{PR_P}^{\mathrm{dim}R_P}(R_P).$$
Since $R_P$ is Cohen-Macaulay and $F$-injective so $\mathrm{Ker}(F^e_P)=0^F_{H_{PR_P}^{\mathrm{dim}R_P}(R_P)}=0$. Hence $(0^F_{H^d_{\fm}(R)})^{\vee}_P=0$ for all $P\neq \fm$, so $\mathrm{Supp}_R(0^F_{H^d_{\fm}(R)})\subseteq \{\fm\} $. Therefore $\ell_R(0^F_{H^d_{\fm}(R)})<\infty$.
\end{proof}
\begin{lemma}\label{Fro0} 
Let $(R, \fm)$ be a Noetherian local ring and $I$ an ideal of $R$. Then 
$$0^{F_R}_{H_{\fm}^0(R/I)}=I^F\cap (I:\fm^{\infty})/I.$$
\end{lemma}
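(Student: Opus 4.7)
The plan is to unwind both sides directly from the definitions. On the right-hand side, the identification $H^0_{\fm}(R/J) = (J:\fm^\infty)/J$ for any ideal $J$ shows that the elements of $H^0_{\fm}(R/I)$ are precisely cosets $a+I$ with $a \in I:\fm^\infty$. Similarly the target $H^0_{\fm}(R/I^{[p^e]})$ of the iterated relative Frobenius $F_R^e$ equals $(I^{[p^e]}:\fm^\infty)/I^{[p^e]}$. This reduces the claim to a statement about representatives in $R$.

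Next I would describe the action of $F_R^e$ on these modules. By the definition of the relative Frobenius, $F_R^e(a+I) = a^{p^e} + I^{[p^e]}$. To confirm that this lands in $H^0_{\fm}(R/I^{[p^e]})$, choose $N$ with $a\fm^N \subseteq I$; then $a^{p^e}\fm^{Np^e} = (a\fm^N)^{p^e} \subseteq I^{p^e} \subseteq I^{[p^e]}$, so $a^{p^e} \in I^{[p^e]}:\fm^\infty$, as required. The map is visibly well-defined, since if $a \in I$ then $a^{p^e} \in I^{[p^e]}$.

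From here, reading the condition $F_R^e(a+I) = 0$ amounts to $a^{p^e} \in I^{[p^e]}$, which is the very definition of $a \in I^F$. Consequently
\[
0^{F_R}_{H^0_{\fm}(R/I)} = \bigl\{ a+I \,\bigm|\, a \in I:\fm^\infty \text{ and } a^{p^e} \in I^{[p^e]} \text{ for some } e \ge 0 \bigr\} = \bigl( I^F \cap (I:\fm^\infty)\bigr)/I,
\]
which is the desired equality. No real obstacle arises: the whole argument is a careful bookkeeping of the definition of $F_R$ against the explicit presentation of $H^0_{\fm}$ as an ideal quotient, so the only point requiring a line of justification is the verification that $F_R^e$ actually maps $H^0_{\fm}(R/I)$ into $H^0_{\fm}(R/I^{[p^e]})$, handled by the $\fm$-primary exponent computation above.
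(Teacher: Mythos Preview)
Your approach is the same as the paper's: identify $H^0_{\fm}(R/I)$ with $(I:\fm^\infty)/I$, write out $F_R^e(a+I) = a^{p^e} + I^{[p^e]}$, and read off that vanishing means $a \in I^F$. The paper's proof is just a terser version of exactly this.

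There is, however, one slip in your side verification that $F_R^e$ lands in $H^0_{\fm}(R/I^{[p^e]})$: you write $(a\fm^N)^{p^e} \subseteq I^{p^e} \subseteq I^{[p^e]}$, but the containment $I^{p^e} \subseteq I^{[p^e]}$ goes the wrong way (in general $I^{[p^e]} \subseteq I^{p^e}$, not conversely). A correct argument: from $a\fm^N \subseteq I$ one gets $a^{p^e}(\fm^{[p^e]})^N \subseteq I^{[p^e]}$ by applying the Frobenius, and since $\fm$ is finitely generated some power $\fm^M$ lies in $\fm^{[p^e]}$, whence $a^{p^e}\fm^{MN} \subseteq I^{[p^e]}$. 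Alternatively, this check is unnecessary once the relative Frobenius on local cohomology is defined functorially via \v{C}ech complexes, as the paper does; in that setup $F_R^e$ automatically maps $H^0_{\fm}(R/I)$ to $H^0_{\fm}(R/I^{[p^e]})$, and your main argument goes through untouched.
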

\begin{proof}
Pick $x+I\in 0^{F_R}_{H_{\fm}^0(R/I)}$. We have $x\in I:\fm^{\infty}$ and there exists an integer $e$ such that $F^e(x+I)=x^{p^e}+I^{[p^e]}=0\in R/I^{[p^e]}$. Equivalently, $x+I\in I^F\cap (I:\fm^{\infty})/I $.
\end{proof}
The following theorem is an affirmative answer for \cite[Question 3]{Q18}. The idea of proof is the same with the proof of main result of \cite{Q19}.
\begin{theorem}
Let $(R,\fm)$ be an excellent generalized Cohen-Macaulay local ring of dimension $d$ that is $F$-injective on the punctured spectrum. Let $\frak q$ be a standard parameter ideal of $R$. Then 
$$\ell_R(\fq^F/\fq)\leq \sum\limits_{i=0}^{d}\binom{d}{i}\ell_R(0^F_{H^i_{\fm}(R)}).$$

\end{theorem}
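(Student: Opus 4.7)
My approach follows the strategy of \cite{Q19}, running an induction on the number of parameters with the aid of the relative Frobenius action on local cohomology. First I would pass to the $\fm$-adic completion, which preserves $F$-injectivity on the punctured spectrum (since $R$ is excellent) and all relevant length invariants; by Lemma~\ref{Lemma 3.1} each $\ell_R(0^F_{H^i_\fm(R)})$ is then finite. Since $\fq$ is a parameter ideal, $R/\fq$ has finite length, so $\fq:_R\fm^\infty = R$, and Lemma~\ref{Fro0} applied to $I = \fq$ gives the key reformulation
$$\fq^F/\fq \;=\; 0^{F_R}_{H^0_\fm(R/\fq)}.$$

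Write $\fq_i = (x_1,\dots,x_i)$ and $E_k = \ell_R(0^F_{H^k_\fm(R)})$. I would prove by induction on $i$ the sharper claim: for all $0 \le i \le d$ and $0 \le j \le d-i$,
$$\ell_R\bigl(0^{F_R}_{H^j_\fm(R/\fq_i)}\bigr) \;\le\; \sum_{k=0}^{i}\binom{i}{k} E_{j+k},$$
whose $(i,j) = (d,0)$ case is the theorem. The base $i=0$ is immediate since the relative Frobenius on $R$ is just $F$. For the inductive step from $i-1$ to $i$, standardness of $\fq$ gives $(\fq_{i-1}:_R x_i)/\fq_{i-1} = H^0_\fm(R/\fq_{i-1})$ and $x_i H^l_\fm(R/\fq_{i-1}) = 0$ for all $l \le d-i$. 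Setting $M = R/(\fq_{i-1}:_R x_i)$ and splitting the canonical sequence into $0 \to H^0_\fm(R/\fq_{i-1}) \to R/\fq_{i-1} \to M \to 0$ and $0 \to M \xrightarrow{x_i} R/\fq_{i-1} \to R/\fq_i \to 0$, the first gives $H^l_\fm(M) \cong H^l_\fm(R/\fq_{i-1})$ for $l \ge 1$; together with the vanishing of $x_i$ on lower local cohomology of $R/\fq_{i-1}$, the long exact sequence of the second collapses for $0 \le j \le d-i$ to
$$0 \to H^j_\fm(R/\fq_{i-1}) \to H^j_\fm(R/\fq_i) \to T_{i,j} \to 0,$$
where $T_{i,j} = H^{j+1}_\fm(R/\fq_{i-1})$ when $j \le d-i-1$ and $T_{i,j} = 0:_{H^{d-i+1}_\fm(R/\fq_{i-1})} x_i$ when $j = d-i$.

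Because these exact sequences are compatible with the relative Frobenius $F_R$ on the local cohomology of the $R/\fq_i$ (by naturality of the \v{C}ech complex under $p$-th powers), a routine diagram chase produces
$$\ell_R\bigl(0^{F_R}_{H^j_\fm(R/\fq_i)}\bigr) \;\le\; \ell_R\bigl(0^{F_R}_{H^j_\fm(R/\fq_{i-1})}\bigr) + \ell_R\bigl(0^{F_R}_{H^{j+1}_\fm(R/\fq_{i-1})}\bigr),$$
using $T_{i,j} \hookrightarrow H^{j+1}_\fm(R/\fq_{i-1})$ in the boundary case. Applying the induction hypothesis at $(i-1,j)$ and $(i-1,j+1)$ (both sit within the valid range $(i-1)+(j+1)\le d$) and invoking Pascal's identity $\binom{i}{k} = \binom{i-1}{k} + \binom{i-1}{k-1}$ closes the induction.

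The main obstacle is the Frobenius compatibility underlying the diagram chase: one must verify that the relative Frobenius action on local cohomology respects the exact sequences constructed above, so that the elementary subadditivity of Frobenius closure along a compatible short exact sequence (if $0 \to A \to B \to C \to 0$ carries compatible Frobenius-type actions, then $0^F_B/0^F_A \hookrightarrow 0^F_C$) carries over to the relative setting. A secondary technical point is that at $j = d-i$ the term $T_{i,j}$ is only a submodule of the top local cohomology of $R/\fq_{i-1}$; this causes no difficulty because any element in its relative Frobenius closure is killed by some $F_R^e$ in the ambient local cohomology, so the submodule bound used above is automatic.
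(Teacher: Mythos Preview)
Your proposal is correct and follows essentially the same route as the paper: both set $\fq_i=(x_1,\dots,x_i)$, use standardness to extract short exact sequences $0\to H^j_\fm(R/\fq_{i-1})\to H^j_\fm(R/\fq_i)\to H^{j+1}_\fm(R/\fq_{i-1})\to 0$ (with the $(0:x_i)$ modification at $j=d-i$), apply the relative Frobenius action and the injectivity of the left map to deduce subadditivity of $\ell_R(0^{F_R}_{-})$, and induct via Pascal's rule down to $0^{F_R}_{H^0_\fm(R/\fq)}=\fq^F/\fq$ from Lemma~\ref{Fro0}. Your splitting through $M=R/(\fq_{i-1}:_R x_i)$ merely makes explicit the identification $H^{j+1}_\fm(R/\fq_{i-1}:x_i)\cong H^{j+1}_\fm(R/\fq_{i-1})$ that the paper uses tacitly, and the initial passage to completion is harmless though not invoked in the paper's argument.
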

\begin{proof}
Let  $\fq=(x_1,\ldots,x_d)$. Since $R$ is generalized Cohen-Macaulay we have $x_1,\ldots,x_d$ is a filter regular sequence. Set $\fq_i=(x_1,\ldots,x_i)$ for all $i=1,\ldots, d$, $\fq_0=(0)$. The short exact
$$0\to R/\fq_{i-1}:x_i\xrightarrow{.x_i}R/\fq_{i-1}\to R/\fq_i\to 0$$
induces a long exact for all $j\ge 0$
$$ H^j_{\fm}(R/\fq_{i-1}:x_i)\xrightarrow{.x_i} H^j_{\fm}(R/\fq_{i-1})\to H^j_{\fm}(R/\fq_i)\to H^{j+1}_{\fm}(R/\fq_{i-1}:x_i)\xrightarrow{.x_i} H^{j+1}_{\fm}(R/\fq_{i-1}).$$
Since $\fq$ is standard, $\fq H^j_{\fm}(R/\fq_i)=0$ for all $i+j<d$ so the map $ H^j_{\fm}(R/\fq_{i-1}:x_i)\xrightarrow{.x_i} H^j_{\fm}(R/\fq_{i-1})$ is zero map for all $i+j\leq d$. Therefore, for all $i+j<d$ we have the following exacts
$$0\to H^j_{\fm}(R/\fq_{i-1})\to H^j_{\fm}(R/\fq_i)\to H^{j+1}_{\fm}(R/\fq_{i-1})\to 0.$$
$$0\to H^{d-i}_{\fm}(R/\fq_{i-1})\to H^{d-i}_{\fm}(R/\fq_i)\to (0:x_i)_{H^{d-i+1}_{\fm}(R/\fq_{i-1})}\to 0.$$ 
When $i+j<d$, from the commutative diagram
$$
\begin{CD}
0\longrightarrow H_{\fm}^j(R/\fq_{i-1}) @>>f> H_{\fm}^j( R/\fq_{i})  @>>g>H_{\fm}^{j+1}( R/q_{i-1}) @>>> 0 \\
@VF^e_R VV @VF^e_RVV @VF^e_RVV  (\star )\\
0\longrightarrow H_{\fm}^j(R/\fq_{i-1}^{[p^e]}) @>>f'> H_{\fm}^j(R/\fq_{i}^{[p^e]}) @>>g'> H_{\fm}^{j+1}(R/\fq_{i-1}^{[p^e]}) @>>> 0
\end{CD}
$$
we have the following exact
$$0\to 0^{F_R}_{H_{\fm}^j(R/\fq_{i-1})}\xrightarrow{\overline{f}}0^{F_R}_{H_{\fm}^j(R/\fq_{i})}\xrightarrow{\overline{g}} 0^{F_R}_{H_{\fm}^{j+1}(R/\fq_{i-1})},$$
where $\overline{f}=f\mid_{0^{F_R}_{H_{\fm}^j(R/\fq_{i-1})}}$, $\overline{g}=g\mid_{0^{F_R}_{H_{\fm}^j(R/\fq_{i})}}$.
The exactness is due to the injection of $f$ and $f'$. Therefore for all $i+j<d$ we have
$$\ell_R(0^{F_R}_{H^j_{\fm}(R/\fq_i)})\leq \ell_R(0^{F_R}_{H^j_{\fm}(R/\fq_{i-1})})+ \ell_R(0^{F_R}_{H^{j+1}_{\fm}(R/\fq_{i-1})}).$$
Similarly when $i+j=d$ we have $$\ell_R(0^{F_R}_{H^{d-i}_{\fm}(R/\fq_i)})\leq \ell_R(0^{F_R}_{H^{d-i}_{\fm}(R/\fq_{i-1})})+ \ell_R(0^{F_R}_{H^{d-i+1}_{\fm}(R/\fq_{i-1})}).$$
To summarize for all $i+j\leq d$ we have
 $$\ell_R(0^{F_R}_{H^j_{\fm}(R/\fq_i)})\leq \ell_R(0^{F_R}_{H^j_{\fm}(R/\fq_{i-1})})+ \ell_R(0^{F_R}_{H^{j+1}_{\fm}(R/\fq_{i-1})}).$$
By induction on $k$ we have
$$\ell_R(0^{F_R}_{H^0_{\fm}(R/\fq_d)})\leq \sum\limits_{i=0}^{k}\binom{k}{i}\ell_R(0^{F_R}_{H^i_{\fm}(R/\fq_{d-k})}).$$
For $k=d$ we obtain
 $$\ell_R(0^{F_R}_{H^0_{\fm}(R/\fq_d)})\leq \sum\limits_{i=0}^{d}\binom{d}{i}\ell_R(0^{F}_{H^i_{\fm}(R)}).$$
By Lemma \ref{Fro0}, $0^{F_R}_{H^0_{\fm}(R/\fq_d)}=\fq^F/\fq$ and we have
    $$\ell_R(\fq^F/\fq)\leq \sum\limits_{i=0}^{d}\binom{d}{i}\ell_R(0^{F}_{H^i_{\fm}(R)}).$$
The proof is completed.
\end{proof}
\begin{remark}
Let $(R, \fm)$ be an excellent generalized Cohen-Macaulay local ring and $F$-injective on the punctured spectrum. By the same technique we can prove that for all parameter ideals $\fq$ we have
$$\ell_R(\fq^F/\fq)\leq \sum\limits_{i=0}^{d-1}\binom{d}{i}\ell_R(H^i_{\fm}(R)) + \ell(0^{F}_{H^d_{\fm}(R)}).$$
\end{remark}
We next consider parameter ideal contained in a lager power of $\frak m$.
\begin{theorem}\label{MainThm2} 
Let $(R,\fm)$ be an excellent generalized Cohen-Macaulay local ring of dimension $d$ that is $F$-injective on the punctured spectrum. Let $n_0$ be a positive integer such that $\fm^{n_0} H^i_{\fm}(R) = 0$ for all $i < d$, and $\fm^{n_0} 0^F_{H^d_{\fm}(R)} = 0$. Let $\frak q$ be a parameter ideal of $R$ contained in $\fm^{2n_0}$. Then 
$$\fq^F/\fq \cong \bigoplus_{i=0}^d (0^F_{H^i_{\fm}(R)})^{\binom{d}{i}}.$$
\end{theorem}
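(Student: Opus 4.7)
My plan is to refine the induction from the proof of the previous theorem by using Lemma~\ref{splitting} to split each cohomology short exact sequence appearing there. Write $\fq=(x_1,\dots,x_d)\subseteq\fm^{2n_0}$, set $\fq_i=(x_1,\dots,x_i)$, and $M_i=R/\fq_i$. The same chain of short exact sequences
$$0\to H^j_{\fm}(M_{i-1})\xrightarrow{f} H^j_{\fm}(M_i)\xrightarrow{g} H^{j+1}_{\fm}(M_{i-1})\to 0,\qquad i+j\le d,$$
used in Theorem~3.3 is available. Since $x_i\in\fm^{2n_0}$, Lemma~\ref{splitting} applies to $M_{i-1}$ (after checking by a parallel induction that $M_{i-1}$ is generalized Cohen--Macaulay with lower local cohomologies annihilated by $\fm^{n_0}$) and splits this sequence, with the socle variant of Lemma~\ref{splitting} taking over when $j+1=\dim M_{i-1}$. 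Setting
$$S_j^{(i)}:=\begin{cases} H^j_{\fm}(M_i) & \text{if } j<d-i,\\ 0:_{H^{d-i}_{\fm}(M_i)}\fm^{n_0} & \text{if } j=d-i, \end{cases}$$
I would inductively deduce $S_j^{(i)}\cong S_j^{(i-1)}\oplus S_{j+1}^{(i-1)}$, and then by Pascal's identity
$$S_j^{(i)}\cong\bigoplus_{k=0}^{i}\bigl(S_{j+k}^{(0)}\bigr)^{\binom{i}{k}},\qquad 0\le j\le d-i.$$
Taking $(i,j)=(d,0)$ yields a decomposition of $H^0_{\fm}(R/\fq)$.

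The main obstacle is to check that this decomposition is compatible with the relative Frobenius $F_R^e:H^j_{\fm}(M_i)\to H^j_{\fm}(M_i^{[p^e]})$, because only then does the decomposition descend to the Frobenius closures. I would attack this by running the same induction in parallel on the quotients $R/\fq_i^{[p^e]}$ (whose parameters $x_i^{p^e}$ also lie in $\fm^{2n_0}$) and by arguing that the splittings produced by Lemma~\ref{splitting}, as constructed in \cite{CQ11}, are sufficiently natural in the pair $(M,x)$ that the relative Frobenius squares commute with the chosen direct sum decompositions on both floors. Granting this naturality, the decomposition restricts to the $0^{F_R}$-submodules, giving
$$0^{F_R}_{H^0_{\fm}(R/\fq)}\cong\bigoplus_{k=0}^d\bigl(0^{F_R}_{S_k^{(0)}}\bigr)^{\binom{d}{k}}.$$
By hypothesis, $\fm^{n_0}\cdot 0^F_{H^k_{\fm}(R)}=0$ for every $k$ (automatically for $k<d$, assumed for $k=d$), so $0^F_{H^k_{\fm}(R)}\subseteq S_k^{(0)}$ and $0^{F_R}_{S_k^{(0)}}=0^F_{H^k_{\fm}(R)}$.

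Finally, the previous theorem forces $\fq^F/\fq$ to have finite length, whence $\fq^F\subseteq\fq:_R\fm^\infty$. Lemma~\ref{Fro0} then identifies $0^{F_R}_{H^0_{\fm}(R/\fq)}$ with $\fq^F/\fq$, and combining with the preceding display produces the desired isomorphism
$$\fq^F/\fq\cong\bigoplus_{k=0}^d\bigl(0^F_{H^k_{\fm}(R)}\bigr)^{\binom{d}{k}}.$$
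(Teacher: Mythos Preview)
Your proposal is correct and follows essentially the same inductive strategy as the paper: iterate Lemma~\ref{splitting} along the chain $R,\,R/(x_1),\,\ldots,\,R/\fq$ and track the relative Frobenius closures through the resulting direct-sum decompositions, invoking the socle variant at the top degree and Lemma~\ref{Fro0} at the end. The only real difference is presentational: the paper directly asserts that the splitting $H^i_{\fm}(R/(x_1))\cong H^i_{\fm}(R)\oplus H^{i+1}_{\fm}(R)$ passes to the $0^{F_R}$-submodules without isolating the Frobenius-compatibility of the section as a separate step, whereas you explicitly flag this as the main obstacle and propose to resolve it via the naturality (in the pair $(M,x)$) of the splittings constructed in \cite{CQ11}; that naturality is implicitly relied upon in the paper's argument as well. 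Your final remark that finite length of $\fq^F/\fq$ is needed before applying Lemma~\ref{Fro0} is unnecessary, since $\fq$ is a parameter ideal and hence $\fq:\fm^\infty=R$ automatically.
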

\begin{proof} Let $\fq = (x_1, \ldots, x_d)$. Since
$$H^i_{\fm}(R/(x_1)) \cong H^i_{\fm}(R) \oplus H^{i+1}_{\fm}(R)$$
for all $i < d-1$ we have $0^{F_R}_{H^i_{\fm}(R/(x_1))} \cong 0^F_{H^i_{\fm}(R)} \oplus 0^F_{H^{i+1}_{\fm}(R)}$ for all $i < d-1$. On the other hand since
 $\fm^{n_0} 0^F_{H^d_{\fm}(R)} = 0$ we have $0^F_{H^d_{\fm}(R)} \subseteq 0:_{H^d_{\fm}(R)} \fm^{n_0}$. The direct summand 
$$0:_{H^{d-1}_{\fm}(R/(x_1))} \fm^{n_0} \cong H^{d-1}_{\fm}(R) \oplus 0:_{H^d_{\fm}(R)} \fm^{n_0}$$ implies that $0^{F_R}_{H^{d-1}_{\fm}(R/(x_1))} \subseteq 0:_{H^{d-1}_{\fm}(R/(x_1))} \fm^{n_0}$, and $0^{F_R}_{H^{d-1}_{\fm}(R/(x_1))} \cong 0^F_{H^{d-1}_{\fm}(R)} \oplus 0^F_{H^{d}_{\fm}(R)}$. Therefore
$$0^{F_R}_{H^i_{\fm}(R/(x_1))} \cong 0^F_{H^i_{\fm}(R)} \oplus 0^F_{H^{i+1}_{\fm}(R)}$$ 
for all $i \le d-1$. Continue this progress we have 
$$0^{F_R}_{H^j_{\fm}(R/(x_1, \ldots, x_i))} \cong \bigoplus_{k=j}^{i+j} (0^F_{H^k_{\fm}(R)})^{\binom{i}{k-j}}$$
for all $i + j \le d$. Moreover $\fq^F/\fq \cong 0^{F_R}_{H^0_{\fm}(R/(x_1, \ldots, x_d))} $ so we have 
$$\fq^F/\fq \cong \bigoplus_{i=0}^d (0^F_{H^i_{\fm}(R)})^{\binom{d}{i}}.$$
The proof is completed.
\end{proof}
Let $\fq$ be a parameter ideal of $R$, and $e$ an integer. Notice that $(\fq^F)^{[p^e]}\subseteq (\fq^{[p^e]})^F$, so $\mathrm{Fte}(\fq)\leq \mathrm{Fte}(\fq^{[p^e]})+e$. Therefore we have the following corollary as a consequence.
\begin{corollary} Let $(R,\fm)$ be an excellent generalized Cohen-Macaulay local ring of dimension $d$ that is $F$-injective on punctured spectrum. Let $n_0$ be a positive integer such that $\fm^{n_0} H^i_{\fm}(R) = 0$ for all $i < d$, and $\fm^{n_0} 0^F_{H^d_{\fm}(R)} = 0$. We have
$$\mathrm{Fte}(R) \le \lceil  \log_p(2n_0)\rceil + \mathrm{HSL}(R),$$
where $\lceil x\rceil$ is the smallest integer that is greater than or equal to $x$. 
\end{corollary}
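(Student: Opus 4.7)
The plan is to reduce, via the elementary containment $(\fq^F)^{[p^e]} \subseteq (\fq^{[p^e]})^F$ (equivalently $\mathrm{Fte}(\fq) \leq \mathrm{Fte}(\fq^{[p^e]}) + e$) recorded just before the Corollary, to bounding $\mathrm{Fte}$ on parameter ideals contained in $\fm^{2n_0}$. Setting $e_0 := \lceil \log_p(2n_0)\rceil$ so that $p^{e_0} \geq 2n_0$, any parameter ideal $\fq = (x_1, \ldots, x_d)$ of $R$ has Frobenius power $\fq^{[p^{e_0}]}$ generated by elements of $\fm^{p^{e_0}} \subseteq \fm^{2n_0}$, and by the last sentence of Lemma \ref{splitting} it is standard. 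It therefore suffices to establish $\mathrm{Fte}(\fq') \leq \mathrm{HSL}(R)$ for every parameter ideal $\fq' \subseteq \fm^{2n_0}$.

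For such $\fq'$, I would invoke Theorem \ref{MainThm2} to obtain
$$\fq'^F/\fq' \cong \bigoplus_{i=0}^d (0^F_{H^i_\fm(R)})^{\binom{d}{i}}.$$
Via Lemma \ref{Fro0}, the left-hand side equals $0^{F_R}_{H^0_\fm(R/\fq')}$ and carries the relative Frobenius $F_R: R/\fq' \to R/\fq'^{[p]}$, while each summand on the right carries the natural Frobenius on local cohomology. The key step is to read this isomorphism as Frobenius equivariant, intertwining $F_R$ on the left with $F$ acting summand-wise on the right, a compatibility inherited from the splittings of Lemma \ref{splitting} used iteratively in the proof of Theorem \ref{MainThm2}. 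Granting this, since each $0^F_{H^i_\fm(R)}$ is annihilated by $F^{\mathrm{HSL}(R)}$ by definition of $\mathrm{HSL}(R)$, so is the right-hand side, whence $F_R^{\mathrm{HSL}(R)}$ annihilates $\fq'^F/\fq'$. Concretely, $a^{p^{\mathrm{HSL}(R)}} \in \fq'^{[p^{\mathrm{HSL}(R)}]}$ for every $a \in \fq'^F$, so $(\fq'^F)^{[p^{\mathrm{HSL}(R)}]} \subseteq \fq'^{[p^{\mathrm{HSL}(R)}]}$, giving $\mathrm{Fte}(\fq') \leq \mathrm{HSL}(R)$.

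Chaining the two inequalities, every parameter ideal $\fq$ of $R$ will then satisfy $\mathrm{Fte}(\fq) \leq \mathrm{Fte}(\fq^{[p^{e_0}]}) + e_0 \leq \mathrm{HSL}(R) + \lceil \log_p(2n_0)\rceil$, and taking the supremum over $\fq$ yields the bound on $\mathrm{Fte}(R)$ claimed by the Corollary. The main obstacle will be the Frobenius equivariance of the decomposition in Theorem \ref{MainThm2}: that isomorphism is built iteratively from the splittings $H^i_\fm(R/(x)) \cong H^i_\fm(R) \oplus H^{i+1}_\fm(R)$ in Lemma \ref{splitting}, which rest on an auxiliary factorization $x = yz$ with $y, z \in \fm^{n_0}$, and one must verify that these splittings commute with the relative Frobenius under the substitution $x \mapsto x^p$. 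This requires inspecting the construction from \cite{CQ11} and is the single nontrivial technical point; the rest of the argument is formal.
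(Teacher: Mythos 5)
Your proposal is correct and follows essentially the same route as the paper: reduce to parameter ideals inside $\fm^{2n_0}$ via $\mathrm{Fte}(\fq)\leq \mathrm{Fte}(\fq^{[p^{e_0}]})+e_0$ with $e_0=\lceil \log_p(2n_0)\rceil$, then apply Theorem \ref{MainThm2} and kill the right-hand side with $F^{\mathrm{HSL}(R)}$. The Frobenius-equivariance of the decomposition that you flag as the one nontrivial point is indeed what the paper tacitly uses when it ``applies $F^{\mathrm{HSL}(R)}(-)$ to the equivalence''; it is inherited from the way the isomorphisms of the modules $0^{F_R}_{H^j_{\fm}(R/(x_1,\ldots,x_i))}$ are built in the proof of Theorem \ref{MainThm2}, so your reading matches the intended argument.
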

\begin{proof}
If parameter ideal $\fq\subseteq \fm^{2n_0}$, by Theorem \ref{MainThm2} we have
 $$\fq^F/\fq \cong \bigoplus_{i=0}^d (0^F_{H^i_{\fm}(R)})^{\binom{d}{i}}.$$
 Applying $F^{\mathrm{HSL}(R)}(-)$ to the above equivalence, we see that $F^{\mathrm{HSL}(R)}(0^F_{H^i_{\fm}(R)})=0$ for all $i\ge 0$ and thus $(\fq^F)^{[p^{\mathrm{HSL}(R)}]}=\fq^{[p^{\mathrm{HSL}(R)}]}$. Hence, $\mathrm{Fte(\fq)}\leq \mathrm{HSL}(R)$ for every parameter ideal $\fq\subseteq \fm^{2n_0}$. 
Take any parameter ideal $\fq$, set $e_0=\lceil  \log_p(2n_0)\rceil$, then $\fq^{[p^{e_0}]}\subseteq \fq^{2n_0}\subseteq \fm^{2n_0}$ and thus  $\mathrm{Fte}(\fq)\leq \mathrm{Fte}(\fq^{[p^{e_0}]})+e_0$.
Combination all above observation together we complete the proof. 
\end{proof}
We suspect the assumption of $F$-injective on the punctured spectrum not necessary.
\begin{question} Let $(R,\fm)$ be an excellent generalized Cohen-Macaulay local ring of dimension $d$ of characteristic $p$. Let $n_0$ be a positive integer such that $\fm^{n_0} H^i_{\fm}(R) = 0$ for all $i < d$. Is it true that
$$\mathrm{Fte}(R) \le \lceil  \log_p(2n_0)\rceil + \mathrm{HSL}(R),$$
where $\lceil x\rceil$ is the smallest integer that is greater than or equal to $x$. 
\end{question}


\begin{thebibliography}{99}
\bibitem{B06} H. Brenner, \emph{Bounds for test exponents}, Compos. Math. {\bf} 142 (2006), 451--463.

\bibitem{CQ11} N.T. Cuong and P.H. Quy, \emph{A splitting theorem for local cohomology and its applications}, J. Algebra {\bf 331} (2011), 512--522.

\bibitem{CST78}
 N.T. Cuong, P. Schenzel and N.V. Trung, \emph{Verallgemeinerte Cohen-Macaulay moduln},  Math. Nachr. {\bf 85} (1978), 57--73.

 
  
 \bibitem{HS77} R. Hartshorne and R. Speiser, \emph{Local cohomological dimension in characteristic $p$}, Ann. of Math. {\bf 105} (1977), 45--79.

\bibitem{HKSY06} C. Huneke, M. Katzman, R.Y. Sharp and Y. Yao, \emph{Frobenius test exponents for parameter ideals in generalized Cohen-Macaulay local rings},
J. Algebra {\bf 305} (2006), 516--539.

\bibitem{HQ19} D.T. Huong and P.H. Quy, \emph{Notes on the Frobenius test exponents}, Comm. Algebra {\bf 47} (2019), No. 7, 2702--2710.

\bibitem{HQ20} D.T. Huong and P.H. Quy, \emph{Upper bound of multiplicity in prime characteristic}, Forum Math. {\bf 32} (2020), No. 2, 393--398.

\bibitem{HQ22} D.T. Huong and P.H. Quy, \emph{Frobenius test exponent for ideals generated by filter regular sequences}, Acta Math. Vietnam. {\bf 47} (2022), 151--159.

\bibitem{KS06} M. Katzman and R.Y. Sharp, \emph{Uniform behaviour of the Frobenius closures of ideals generated
by regular sequences}, J. Algebra {\bf 295} (2006) 231--246.


\bibitem{L97} G. Lyubeznik, \emph{$F$-modules: applications to local cohomology and $D$-modules in characteristic $p>0$}, J. Reine
Angew. Math. {\bf 491} (1997), 65--130.

\bibitem{Ma15} L. Ma, \emph{$F$-injectivity and Buchsbaum singularities}, Math. Ann. {\bf 362} (2015), 25--42.


\bibitem{M19} K. Maddox, \emph{A sufficient condition for finiteness of Frobenius test exponents}, Proc. Amer. Math.Soc. {\bf 147} (2019), no. 12, 5083--5092.


\bibitem{PQ19} T. Polstra and P.H. Quy, \emph{Nilpotence of Frobenius actions on local cohomology and Frobenius closure of ideals}, J. Algebra {\bf 529} (2019), 196--225.

\bibitem{Q18} P.H. Quy, \emph{Tight closure of parameter ideals in local rings F-rational on the punctured spectrum}, J Pure Appl. Algebra {\bf 222} (2018), 1126--1138.

\bibitem{Q19} P.H. Quy, \emph{On the uniform bound of Frobenius test exponents}, J. Algebra {\bf 518} (2019), 119--128.

\bibitem{QS17} P.H. Quy and K. Shimomoto, \emph{$F$-injectivity and Frobenius closure of ideals in Noetherian rings of characteristic $p> 0$}, Adv. Math. {\bf 313} (2017), 127--166.

\bibitem{Sh07} R.Y. Sharp, \emph{On the Hartshorne-Speiser-Lyubeznik theorem about Artinian modules with a Frobenius action}, Proc. Amer. Math. Soc. {\bf 135} (2007), 665--670.

 
 \end{thebibliography}
\end{document}